\newcommand{\comment}[1]{}
\newcommand{\eq}{\begin{equation}}
\newcommand{\en}{\end{equation}}
\newcommand{\rr}{\mathbb{R}}
\begin{document}

\theoremstyle{plain}
\newtheorem{thm}{Theorem}
\newtheorem{lemma}[thm]{Lemma}
\newtheorem{prop}[thm]{Proposition}
\newtheorem{cor}[thm]{Corollary}

\theoremstyle{definition}
\newtheorem{defn}{Definition}
\newtheorem{asmp}{Assumption}
\newtheorem{notn}{Notation}
\newtheorem{prb}{Problem}

\theoremstyle{remark}
\newtheorem{rmk}{Remark}
\newtheorem{exm}{Example}
\newtheorem{clm}{Claim}

\title[sharpness principle]{On a conjectured sharpness principle for probabilistic forecasting with calibration}

\author{Soumik Pal}
\address{Department of Mathematics \\ University of Washington\\ Seattle, WA 98195}
\email{soumik@u.washington.edu}

\begin{abstract}
This note proves a weak type of the sharpness principle as conjectured by Gneiting, Balabdaoui, and Raftery \cite{GBR} in connection with probabilistic forecasting subject to calibration constraints. A strong version of such a principle still awaits a proper formulation.
\end{abstract}




\date{\today}
\maketitle


\section{Introduction} This note is concerned with the conjectured {sharpness principle} described in the Gneiting-Balabdaoui-Raftery article on probabilistic forecasts \cite{GBR} subject to calibration constraints. Using predictive distributions for making forecasts has been steadily gaining acceptance in various fields such as meteorology (Gneiting \& Raftery \cite{GR05}), economics (Granger \cite{granger06}), and finance (Duffie \& Pan \cite{dp97}). The conceptual appeal of probabilistic forecasts and the advances in Markov Chain Monte Carlo methodology have made them very attractive. For a recent editorial on probabilistic forecasting, please see the article by Gneiting \cite{gn08}.

Predictive performances of forecasts have been traditionally measured by what is known as the probability integral transform or PIT. The PIT essentially involves applying the forecasted distribution functions to their corresponding true observations and checking the uniformity of the resulting histogram. This method first proposed by Dawid \cite{dawid84} and further by Diebold, Gunther, \& Tay \cite{DGT98}, only leads to a necessary condition for a forecaster to be ideal. In fact, that using the PIT just by itself can lead to erroneous conclusions was succinctly demonstrated by an example of Hamill \cite{ham01}. To address this issue Gneiting {et al.} in \cite{GBR} introduce the concept of maximizing the sharpness of the predictive distributions subject to calibrations. They define calibration as {\lq}the statistical consistency between the predictive distributions and the associated observations, and is a joint property of the predictions and the values that materialize{\rq}. Sharpness, on the other hand, {\lq}refers to the concentration of the predictive distributions and is a property of the forecasts only{\rq}. The authors then define a theoretical framework to assess calibration and sharpness and distinguish between several modes of calibration. Our article is based on this framework.

Let $\{F_t, \; t=1,2,\ldots\}$ and $\{G_t, \; t=1,2,\ldots \}$ denote sequences of continuous and strictly increasing distribution functions on the real line, possibly depending on stochastic parameters. One thinks of the $G_t$'s as the true data-generating distributions and the corresponding sequence of $F_t$'s as the sequence of probabilistic forecasts. Among all the modes of calibration proposed in \cite{GBR}, probably the most useful notion is that of probabilistic calibration. The sequence $\{F_t\}$ is probabilistically calibrated relative to the sequence $\{G_t\}$ if  
\begin{equation}\label{calibfull}
\lim_{T\rightarrow \infty}\frac{1}{T}\sum_{i=1}^T G_i\circ F_i^{-1}(p) = p , \quad \forall\; p\in (0,1).
\end{equation}
It is not difficult to see that probabilistic calibration is essentially equivalent to the uniformity of the PIT histogram, a fact that leads to the practicality of its applications. 

We should mention that calibration in the context of forecasting binary events is well studied in Statistics literature. See, for example, the articles by DeGroot \& Fienberg \cite{DF82}, Dawid \& Vovk \cite{DV99}, and Shafer \& Vovk \cite{SV01}. The elegant game-theoretic approach of Vovk \& Shafer \cite{VS05} is a natural culmination of this approach. The models considered here, however, are continuous and not within the purview of the previous approach. 

\section{Main results}

The particular problem addressed in this note is the one stated in Section 2.4 of \cite{GBR}. It involves what the authors of \cite{GBR} call the {sharpness principle}. Ideally one would like to forecast the data generating distribution perfectly. Supposing that one maximizes sharpness among predictive distributions subject to calibration, how close to perfect forecasts can she get ? The strong sharpness principle in \cite{GBR} states that perfect forecasts and maximization of sharpness subject to calibration is indeed the same. For probabilistic calibration it is difficult to find natural examples where such a principle holds. Thus, in \cite{GBR}, the authors also formulate the weak sharpness principle. Suppose the sequence $\{F_t\}$ is probabilistically calibrated with respect to $\{G_t\}$. If one assesses sharpness by variance, what can one deduce about the accuracy of the forecasts $F_t$'s ? A partial answer, in the case of {climatological forecaster} (when all the $F_t$'s are the same), is provided in Theorem 1 of \cite{GBR}. It is shown that on average the forecasts will have a bigger variance than the actual observations. A similar conclusion for the general case is conjectured, and this is what we settle here. The choice of averaged variance as a measure of sharpness is somewhat arbitrary. Please see section 3 in Gneiting et al. \cite{GSGHJ} for further discussion.

The main result is the following proposition.

\begin{prop}\label{thmfin} 
Suppose, for some $T\in \mathbb{N}$, the sequence of probability distributions $\{F_i\}$ and $\{G_i\}$ satisfy the finite probabilistic calibration condition 
\begin{equation}\label{calib}
\frac{1}{T}\sum_1^T G_t\circ F_t^{-1}(p) = p, \qquad \text{for all}\; p\in (0,1).
\end{equation}

For all such sequences $(F_1, F_2, \ldots, F_T)$ and $(G_1, G_2, \ldots, G_T)$ satisfying \eqref{calib}, we have
\begin{equation}\label{varineq}
\frac{1}{T}\sum_1^T Var(F_i) \ge \frac{1}{T}\sum_1^T Var(G_i),
\end{equation}
with equality if and only if
\begin{equation}\label{wheneq}
E(G_1)- E(F_1)= E(G_2) - E(F_2)= \ldots=E(G_T) - E(F_T).
\end{equation}
Here $E(H)$ and $Var(H)$ denote the expectation and the variance of a random variable with distribution function $H$.
\end{prop}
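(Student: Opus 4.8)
The plan is to pass to quantile functions and work with the comonotone coupling. First I would fix a single uniform random variable $U$ on $(0,1)$ together with an independent copy $U'$, and for each $i$ set $X_i:=F_i^{-1}(U)$, $X_i':=F_i^{-1}(U')$, $Y_i:=G_i^{-1}(U)$, $Y_i':=G_i^{-1}(U')$, so that $X_i,X_i'$ are i.i.d.\ with law $F_i$, $Y_i,Y_i'$ are i.i.d.\ with law $G_i$, and $(X_i,Y_i)$ is the monotone coupling of $F_i$ and $G_i$. The point of this coupling is twofold. On one hand, the elementary identity $\mathrm{Var}(X)-\mathrm{Var}(Y)=\tfrac12\mathbb E[(X-X')^2]-\tfrac12\mathbb E[(Y-Y')^2]$ gives, after summing over $i$,
\[
\frac1T\sum_{i=1}^T \mathrm{Var}(F_i)-\frac1T\sum_{i=1}^T \mathrm{Var}(G_i)=\frac1{2T}\,\mathbb E\Big[\sum_{i=1}^T\big((X_i-X_i')^2-(Y_i-Y_i')^2\big)\Big].
\]
On the other hand, the calibration hypothesis \eqref{calib}, evaluated at $p=U$, acquires the pathwise form $\tfrac1T\sum_{i=1}^T G_i(X_i)=U$ a.s.\ (since $G_i(X_i)=G_i\circ F_i^{-1}(U)$), and subtracting its versions at $p=U$ and $p=U'$ yields $\sum_{i=1}^T\big(G_i(X_i)-G_i(X_i')\big)=T(U-U')$.

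The core of the argument would then be to show that the bracketed quantity above has nonnegative expectation. Since the integrand is symmetric in $(U,U')$ I would condition on $\{U>U'\}$; then monotonicity of $F_i^{-1},G_i^{-1}$ makes every $X_i-X_i'$ and $Y_i-Y_i'$ nonnegative, and the task is to dominate the ``widths'' $Y_i-Y_i'=G_i^{-1}(U)-G_i^{-1}(U')$ of the truth's quantile intervals by the widths $X_i-X_i'$ of the forecasts', after averaging. The calibration identity $\sum_i(G_i(X_i)-G_i(X_i'))=T(U-U')$ says precisely that the $G_i$-masses of the intervals $[X_i',X_i]$ average to $U-U'$, which is exactly the $G_i$-mass of the truth interval $[Y_i',Y_i]$; combining this mass balance with the monotone transport maps $G_i^{-1}\circ F_i$, which carry $[X_i',X_i]$ onto $[Y_i',Y_i]$, is what should produce $\sum_i(Y_i-Y_i')^2\le \sum_i(X_i-X_i')^2$ upon integration. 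An equivalent route, if one prefers to stay with integrals, is to write $\mathrm{Var}(G_i)=\int_0^1(F_i^{-1})^2\psi_i'-(\int_0^1 F_i^{-1}\psi_i')^2$ with $\psi_i=G_i\circ F_i^{-1}$, so that the difference in \eqref{varineq} becomes $\sum_i\int_0^1 (F_i^{-1})^2(1-\psi_i')+\sum_i(E(G_i)^2-E(F_i)^2)$, and then integrate by parts using $\sum_i(1-\psi_i')\equiv 0$; in the climatological case $F_i\equiv F$ this collapses to ``variance of a mixture dominates the average of the variances'', which is Theorem~1 of \cite{GBR}. I expect this step — converting the averaged mass/derivative balance coming from calibration into the quadratic inequality for a \emph{general} sequence $\{G_i\}$ — to be the main obstacle, precisely because the inequality is intrinsically an averaged one and is false term by term.

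For the equality characterization \eqref{wheneq} I would track the chain of inequalities: equality forces, for a.e.\ $(U,U')$, that the transport distortion of each interval contributes nothing to the quadratic defect, which after the integration by parts pins down $E(G_i)-E(F_i)$ to a common value; conversely, if $E(G_i)-E(F_i)\equiv c$, one checks that every intermediate inequality is saturated. Throughout, I would first prove everything under the simplifying assumption that the $F_i,G_i$ have strictly positive continuous densities — so that each $\psi_i$ is a $C^1$ diffeomorphism of $(0,1)$, all the integrations by parts are legitimate, and the boundary terms vanish — and then remove it by approximating arbitrary continuous strictly increasing $F_i,G_i$ by such smooth ones while preserving \eqref{calib} in the limit.
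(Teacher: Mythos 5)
Your setup is sound as far as it goes: the comonotone coupling $X_i=F_i^{-1}(U)$, $Y_i=G_i^{-1}(U)$, the identity $\mathrm{Var}(X)=\tfrac12\mathbb E[(X-X')^2]$, and the pathwise form $\tfrac1T\sum_i G_i(X_i)=U$ of \eqref{calib} are all correct. But the argument stops exactly where the proposition begins. The assertion that the mass balance $\sum_i\bigl(G_i(X_i)-G_i(X_i')\bigr)=T(U-U')$, together with the fact that $G_i^{-1}\circ F_i$ carries $[X_i',X_i]$ onto $[Y_i',Y_i]$, ``should produce'' $\sum_i(Y_i-Y_i')^2\le\sum_i(X_i-X_i')^2$ upon integration is not an argument: equality of (averaged) $G_i$-masses of two intervals says nothing about their Lebesgue lengths unless one controls the local stretching of $G_i^{-1}\circ F_i$, and, as you note yourself, the inequality is false term by term and pathwise. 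No mechanism is given for how integrating over $(U,U')$ converts the mass constraint into the quadratic inequality. The alternative route is likewise only named: the expression $\sum_i\int_0^1(F_i^{-1})^2(1-\psi_i')+\sum_i\bigl(E(G_i)^2-E(F_i)^2\bigr)$ has no evident sign (the second sum is genuinely negative in simple examples, e.g. $G_i=\mathrm{Unif}(0,1)$ with $F_1^{-1}(p)=p^2$, $F_2^{-1}(p)=2p-p^2$), and ``integrate by parts using $\sum_i(1-\psi_i')\equiv 0$'' does not close it, since the boundary-free integrated term $-\sum_i\int 2F_i^{-1}(F_i^{-1})'(\,\psi_i-\mathrm{id}\,)$ again has no determined sign. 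Since this single step is the entire mathematical content of \eqref{varineq}, the proposal is a reformulation of the problem rather than a proof, and the equality characterization \eqref{wheneq} inherits the same gap.

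For contrast, the paper takes a different route: it randomizes over the index, forming the probability-integral-transform variable $U=\prod_i F_i(X_i)^{Z_i}$ with $X_i\sim G_i$ and $Z$ a uniformly chosen bin, uses calibration to identify the law of $U$ as uniform, and evaluates $\mathrm{Var}(H(Z,U))$ with $H(z,u)=\bigl(\prod_i F_i^{z_i}\bigr)^{-1}(u)$ by the law of total variance, conditioning once on $Z$ and once on $U$. The one ingredient from that proof which you should import in any case is the recentering $F_i^*(x)=F_i(x+E(F_i))$, $G_i^*(x)=G_i(x+E(F_i))$: it preserves \eqref{calib} and all variances, reduces the problem to forecasts with common (zero) mean, and is what turns ``all $G_i$ have equal means'' into the stated equality condition \eqref{wheneq}. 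After that reduction the target becomes $\tfrac1T\sum_i E_{F_i}[X^2]\ge\tfrac1T\sum_i\mathrm{Var}(G_i)$, a cleaner statement --- but you would still need a genuine argument for it.
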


With stronger assumptions one can extend the finite horizon result to a limiting result consistent with the definition of probabilistic calibration given in \cite{GBR}. Below we outline one such approach. Our assumptions are deliberately kept stronger than required in order to maintain simplicity of the statement. 

\begin{prop}\label{thm2}
Assume that there is a bounded interval $(a,b)$ such that each $F_i$ is zero to the left of $a$ and one to the right of $b$. Further assume that the sequences $\{F_i\}$ and $\{G_i\}$ satisfy the probabilistic calibration condition \eqref{calibfull} and that there is a function $\theta:(0,1)\rightarrow \rr$ such that
\eq\label{limtheta}
\lim_{T\rightarrow \infty} \sup_{u\in(0,1)}\left\lvert\frac{1}{T} \sum_{i=1}^T \left( F_i^{-1}(u) \right)^2 - \theta(u)\right\rvert=0. 
\en
Then
\begin{equation}\label{fullvarineq}
\liminf_{T\rightarrow \infty} \frac{1}{T}\sum_{i=1}^T Var(F_i) \ge \limsup_{T\rightarrow \infty} \frac{1}{T}\sum_{i=1}^T Var(G_i).
\end{equation}
Here $\limsup$ and $\liminf$ denote the limit superior and the limit inferior of a real valued sequence.
\end{prop}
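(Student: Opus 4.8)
\emph{Strategy.} My plan is to deduce the proposition from the finite-horizon Proposition~\ref{thmfin} by perturbing the forecasts at each horizon $T$ so that \emph{exact} calibration is restored, controlling the size of the perturbation with the boundedness hypothesis, and then letting $T\to\infty$ using \eqref{limtheta}. For Step~1, set $\Psi_T:=\frac1T\sum_{i=1}^T G_i\circ F_i^{-1}$. Since each $F_i^{-1}$ maps $(0,1)$ into $[a,b]$ and each $G_i$ is continuous and strictly increasing, $\Psi_T$ is a continuous strictly increasing self-map of $(0,1)$, which (after an arbitrarily small modification of the $G_i$ near $0$ and $1$ that affects nothing below) may be taken onto. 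By \eqref{calibfull} it converges pointwise to the identity, hence uniformly, its limit being continuous; write $\varepsilon_T:=\sup_{p\in(0,1)}|\Psi_T(p)-p|\to0$, and note $\sup_p|\Psi_T^{-1}(p)-p|\le\varepsilon_T$ as well.

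For Step~2, put $\tilde F_i:=\Psi_T\circ F_i$, a continuous strictly increasing distribution function with $\tilde F_i^{-1}=F_i^{-1}\circ\Psi_T^{-1}$ (still $[a,b]$-valued). Then
\[
\frac1T\sum_{i=1}^T G_i\circ\tilde F_i^{-1}=\Bigl(\frac1T\sum_{i=1}^T G_i\circ F_i^{-1}\Bigr)\circ\Psi_T^{-1}=\Psi_T\circ\Psi_T^{-1}=\mathrm{id},
\]
so $(\tilde F_1,\dots,\tilde F_T)$ and $(G_1,\dots,G_T)$ satisfy the finite calibration condition \eqref{calib}, and Proposition~\ref{thmfin} yields $\frac1T\sum_{i=1}^T Var(\tilde F_i)\ge\frac1T\sum_{i=1}^T Var(G_i)$.

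For Step~3 I check that the perturbation is negligible. Since $\tilde F_i^{-1},F_i^{-1}$ are $[a,b]$-valued, the representation $Var(H)=\int_0^1(H^{-1})^2-\bigl(\int_0^1H^{-1}\bigr)^2$ gives $|Var(\tilde F_i)-Var(F_i)|\le 4(|a|\vee|b|)\int_0^1|\tilde F_i^{-1}(u)-F_i^{-1}(u)|\,du$. Changing variables $u=\Psi_T(p)$ (legitimate because $\Psi_T$ is the same for every $i$), bounding $|F_i^{-1}(p)-F_i^{-1}(\Psi_T(p))|$ by the increment of the nondecreasing function $F_i^{-1}$ across an interval of length $2\varepsilon_T$ about $p$, and interchanging the order of integration once more, one arrives at $\frac1T\sum_{i=1}^T\int_0^1|\tilde F_i^{-1}(u)-F_i^{-1}(u)|\,du\le 4\varepsilon_T(b-a)$, using $|\Psi_T-\mathrm{id}|\le\varepsilon_T$ and that $\frac1T\sum_iF_i^{-1}$ is nondecreasing with total increment at most $b-a$. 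Hence $\eta_T:=\bigl|\frac1T\sum_iVar(\tilde F_i)-\frac1T\sum_iVar(F_i)\bigr|\to0$, and with Step~2,
\[
\frac1T\sum_{i=1}^T Var(F_i)\;\ge\;\frac1T\sum_{i=1}^T Var(G_i)-\eta_T,\qquad \eta_T\to0.
\]

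\emph{The main obstacle is Step~4, the passage to the limit.} The last display only gives $\liminf_T\frac1T\sum Var(F_i)\ge\liminf_T\frac1T\sum Var(G_i)$, which is weaker than \eqref{fullvarineq}; the role of \eqref{limtheta} is precisely to close the gap between $\liminf$ and $\limsup$ on the right. Writing $\frac1T\sum_iVar(F_i)=\int_0^1\theta_T-\frac1T\sum_i(E(F_i))^2$ with $\theta_T\to\theta$ uniformly (so $\int_0^1\theta_T\to\int_0^1\theta$), I would argue by compactness: along subsequences extract weak limits of the empirical measures $\frac1T\sum_i\delta_{(F_i,G_i)}$ on the set of pairs of $[a,b]$-supported distributions (sequentially compact by Helly's theorem), observe that each such limit is calibrated and has squared-quantile average exactly $\theta$ by \eqref{limtheta}, and apply a limiting form of Proposition~\ref{thmfin} over this closed, connected family of limit configurations, exploiting that mixtures of calibrated ensembles are again calibrated with $\theta$ preserved. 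The genuinely delicate point — the one I expect to be hardest — is that the pointwise inequality above does not by itself decouple the worst case for $\frac1T\sum Var(F_i)$ from the worst case for $\frac1T\sum Var(G_i)$; closing this requires the rigidity forced by \eqref{limtheta}, namely that all limiting forecast ensembles carry the \emph{same} profile $\theta$, which in turn (because the $F_i$ are strictly increasing, hence cannot jump across $0$) bounds $\frac1T\sum_i(E(F_i))^2$ from below. The authors' remark that their hypotheses are ``stronger than required'' suggests this is exactly where the slack lies.
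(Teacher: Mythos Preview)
Your perturbation route is genuinely different from the paper's. The paper does not modify the $F_i$; it reruns the two-way variance decomposition of Proposition~\ref{thmfin} with the \emph{non-uniform} pivot $U(T)$ of \eqref{whatisut}. Conditioning on $Z(T)$ still gives $H=X_i\sim G_i$ on $\{Z_i=1\}$, so $Var(H)\ge\tfrac1T\sum Var(G_i)$ holds for every $T$ regardless of the law of $U(T)$; conditioning on $U(T)$ and discarding the squared-mean term gives $Var(H)\le E[\theta_T(U(T))]$. Hence $E[\theta_T(U(T))]\ge\tfrac1T\sum Var(G_i)$ with no perturbation at all, and then weak convergence $U(T)\Rightarrow U\sim\mathrm{Uni}(0,1)$ together with the uniform convergence \eqref{limtheta} yields $E[\theta_T(U(T))]\to\int_0^1\theta$. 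Your Steps~1--3 arrive at essentially the same pointwise inequality by a longer but legitimate detour; the Step~3 estimate is right in spirit, though the change of variables is not needed --- bound $|F_i^{-1}(\Psi_T^{-1}u)-F_i^{-1}(u)|$ directly by $F_i^{-1}(u+\varepsilon_T)-F_i^{-1}(u-\varepsilon_T)$ and integrate.

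The genuine gap is Step~4, and your compactness sketch is both vague and unnecessary. The paper's resolution is much simpler and plugs straight into your argument: first translate each pair by $\alpha_i=E(F_i)$ as in \eqref{howmeant}, which preserves calibration, the uniform boundedness, and all variances, hence also your inequality $a_T\ge b_T-\eta_T$. After translation every $F_i$ has mean zero, so $a_T=\tfrac1T\sum Var(F_i)=\int_0^1\theta_T(u)\,du$, and by \eqref{limtheta} this \emph{converges}. Once $\lim a_T$ exists, the pointwise inequality already gives
\[
\liminf_T a_T=\lim_T a_T=\limsup_T a_T\ \ge\ \limsup_T(b_T-\eta_T)=\limsup_T b_T,
\]
which is \eqref{fullvarineq}. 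No subsequential extraction or ``rigidity of $\theta$'' is needed; the sole job of \eqref{limtheta} is to turn the left-hand side into an honest limit. (One small caveat, shared with the paper's own proof: the translation alters $\theta_T$, so strictly one must either assume \eqref{limtheta} after centering or check separately that $\tfrac1T\sum\alpha_i^2$ converges; the paper simply asserts that the mean-zero reduction ``can be easily removed''.)
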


The significance of the previous inequalities is directly linked to the Gneiting-Balabdaoui-Raftery \cite{GBR} paradigm of maximizing sharpness of a forecast subject to calibration. In practice, competing probabilistic forecasters need to be assessed on the basis of their predictive distributions and the outcomes that materialize, in accordance with Dawid's \cite{dawid84} prequential principle. Ideally we would like a forecaster to predict the true distribution, i.e. $F_t=G_t$ for all times $t$. However, $G_t$ is unobservable, so this is not an operational criterion. Instead \cite{GBR} proposes maximizing sharpness of the predictive distributions subject to calibration. Our results lend further support to their proposal, in the sense that if we measure sharpness by average variance, a probabilistically calibrated forecaster will not be sharper than the ideal forecaster.

However, Proposition \ref{thmfin} allows equality in \eqref{varineq} under apparently weaker conditions \eqref{wheneq} than distributional equality. The quest for notions of calibration that lead to the ideal forecaster as the unique sharpest forecaster (a strong form of the sharpness principle) remains open.

Let us add that one should compare these results to existing results about decomposition of strictly proper scores into sharpness and reliability like terms for both binary forecasting and more general set-up. Please see the recent article by Br\"ocker \cite{brocker} for a complete result and references of previous work. Such a decomposition, of course, will be the most complete version of a sharpness principle that one can desire. However, in our framework, it is not obvious how to obtain a similar decomposition theorem.

The rest of this note contains the proofs of the stated results.

\section{Proofs}

\begin{proof}[Proof of Theorem \ref{thmfin}]  Let $Z=(Z_1, Z_2, \ldots, Z_T)$ be a multinomial random vector with an equal probability of success in each of the $T$ bins.  Let $U$ be the random variable
\[
U:=F_1(X_1)^{Z_1}F_2(X_2)^{Z_2}\ldots F_T(X_T)^{Z_T}
\]
where $X=(X_1,X_2, \ldots, X_T)$ is a random vector such that marginally $X_1 \sim G_1, \ldots, X_T \sim G_T$ and is independent of $Z$. Then, as was mentioned in the paper \cite{GBR}, the finite probabilistic calibration condition implies $U$ is a Uni$(0,1)$ random variable. 

For $(u,z)$ among the possible values of $(U,Z)$, define the function
\eq\label{whatish}
H(z,u)= \left( F_1^{z_1}F_2^{z_2}\ldots F_T^{z_T} \right)^{-1}(u).
\en
We are going to compute $Var(H(Z,U))$ in two different methods.

The first method is to condition on $Z=z$, and use the decomposition
\begin{equation}\label{firstineq}
\begin{split}
Var(H(Z,U))&= E\; Var(H \lvert Z=z) + Var\; E(H \lvert Z=z).  
\end{split}
\end{equation}
Now, it is straightforward to see that
\eq\label{varg}
\begin{split}
Var\left( H\lvert Z=z \right)&= Var(X_i), \quad \text{if}\; z_i=1, z_j=0 \; j \neq i\\
E \left(  H\lvert Z=z  \right)&= E(X_i), \quad \text{if}\; z_i=1, z_j=0 \; j \neq i.
\end{split}
\en
Thus, 
\[
Var(H(Z,U)) = \frac{1}{T}\sum_{i=1}^T Var(X_i) + \frac{1}{T}\sum_{i=1}^T (\mu_i - \bar{\mu})^2.
\]
Here $\mu_i= E (G_i)$ and $\bar{\mu}$ is the mean of the $\mu_i$'s. Thus
\begin{equation}\label{varineq1}
Var(H(Z,U)) \ge \frac{1}{T}\sum_{i=1}^T Var(G_i)
\end{equation}
with equality if and only if all the $G_i$'s have the same mean.

\medskip

The second method to compute the variance of $H(Z,U)$ is to first condition on $U$, and repeat the variance decomposition formula
\begin{equation}\label{decomp2}
\begin{split}
Var(H(Z,U)) &= E\; Var(H \lvert U=p) + Var\; E(H \lvert U=p).
\end{split}
\end{equation}
To compute the above, note that
\begin{equation}\label{computecond2}
\begin{split}
E(H \lvert U=p) &= \frac{1}{T}\sum_{i=1}^T F_i^{-1}(p),\quad E(H^2 \lvert U=p) = \frac{1}{T} \sum_{i=1}^T \left( F_i^{-1}(p) \right)^2,\\
Var(H \lvert U=p) &= \frac{1}{T} \sum_{i=1}^T \left( F_i^{-1}(p) \right)^2 - \frac{1}{T^2}\left(\sum_{i=1}^T F_i^{-1}(p)\right)^2.
\end{split}
\end{equation}
Define $Y_i= F_i^{-1}(U)$, $i=1,2,\ldots,T$. By the finite probabilistic calibration condition, $U$ is marginally uniform. Hence, each $Y_i$ has distribution $F_i$. Now, from the equalities in \eqref{decomp2} and \eqref{computecond2}, we get
\eq\label{vardecomp3}
\begin{split}
Var(H(Z,U))&= \frac{1}{T}\sum_{i=1}^T E \left( Y_i^2 \right) - \frac{1}{T^2} E\left(\sum_{i=1}^T Y_i\right)^2 \\
& + \frac{1}{T^2} Var \left( \sum_{i=1}^T Y_i \right)\\
&= \frac{1}{T}\sum_{i=1}^T E \left( Y_i^2 \right) - \frac{1}{T^2} \left( \sum_{i=1}^T  E(Y_i)\right)^2.
\end{split}
\en
The finally equality follows from the fact that for any $Z$, one has $Var(Z)=E Z^2 - (E Z)^2$.

Thus, we get the equality
\eq\label{varf}
\begin{split}
Var(H(Z,U))&= \frac{1}{T}\sum_{i=1}^T Var(Y_i) + \frac{1}{T}\sum_1^T (E Y_i)^2 - \frac{1}{T^2} \left( \sum_{i=1}^T  E(Y_i)\right)^2,\\
&= \frac{1}{T}\sum_{i=1}^T Var(Y_i) + \frac{1}{T}\sum_{i=1}^T (\alpha_i - \bar{\alpha})^2,
\end{split}
\en
where $\alpha_i = E(F_i)$ and $\bar{\alpha}$ is the mean of the $\alpha_i$'s. 

Thus, if all $\alpha_i$'s (i.e., all the means of $F_i$'s) are the same, we get exact equality above. In that case, combining with inequality \eqref{varineq1}, we get the result
\begin{equation}\label{varineqsame}
\frac{1}{T}\sum_{i=1}^T Var(F_i)  \ge  \frac{1}{T}\sum_{i=1}^T Var(G_i),  
\end{equation}
with equality if and only if the means of all the $G_i$'s are the same.

To prove \eqref{varineq}, in case when all the $\alpha_i$'s are not the same, define
\eq\label{howmeant}
F_i^*(x)= F_i(x+\alpha_i), \quad G_i^*(x)= G_i(x+\alpha_i).
\en
Notice, that the calibration condition \eqref{calib} continues to hold. Additionally, each $F_i^*$ has now mean zero and the same variance as $F_i$. Applying the variance inequality \eqref{varineqsame} for case of $\{F_i^*\}$ and $\{G_i^*\}$, we get the complete result as in the statement.
\end{proof}

\begin{proof}[Proof of Theorem \ref{thm2}]
The proof proceeds by extending the idea of variance decomposition in the finite case to the limit as $T$ tends to infinity. For every fixed $T$, as before, one defines
\eq\label{whatisut}
U(T):=F_1(X_1)^{Z_1}F_2(X_2)^{Z_2}\ldots F_T(X_T)^{Z_T}
\en
where $X=(X_1,X_2, \ldots, X_T)$ is a random vector such that marginally $X_1 \sim G_1, \ldots, X_T \sim G_T$ and is independent of $Z(T)=(Z_1,\ldots,Z_T)$, which is a multinomial with $T$ cells and equal probability of success in each cell. 

Now, the difference here with the finite case is that $U(T)$ is no longer uniform. However, it follows from the condition \eqref{calibfull} that $U(T)$ converges in distribution (or, weak convergence, see Billingsley \cite{billingsley}), as $T$ tends to infinity to $U$, which is a Uni$(0,1)$ random variable. 
However, one can still proceed with a variance decomposition as in the last proof. Notice that equality \eqref{varineq1} continues to hold for each finite $T$ when we define $H$ in \eqref{whatish} by replacing $Z$ by $Z(T)$ and $U$ by $U(T)$.

The equality in \eqref{varf} also holds, but $Y_i$ does not follow $F_i$ since $U(T)$ is not uniform. However, from \eqref{vardecomp3}, one can still write the following. 
\[
Var(H(Z(T),U(T))) \le \frac{1}{T}\sum_{i=1}^T E\left( F_i^{-1}(U(T)\right)^2.
\] 
Combining the observations above, we get that for every fixed $T$, one has
\eq\label{finiteineq1}
E \left[ \frac{1}{T}\sum_{i=1}^T \left( F_i^{-1}(U(T)\right)^2 \right] \ge \frac{1}{T} \sum_{i=1}^T Var(G_i).
\en
\medskip

Now $U(T)$ converges to $U$ in distribution. Define the sequence of functions
\[
\theta_T(u) = \frac{1}{T} \sum_{i=1}^T \left( F_i^{-1}(u) \right)^2, \quad u\in (0,1).
\]
Then, each $\theta_T$ is continuous and bounded by our assumptions. Also, as $T$ tends to infinity, $\theta_T$ converges to $\theta$ uniformly. Thus, from the standard theory of weak convergence, it follows that $\lim_{T\rightarrow \infty} E \theta_T(U(T))$ exists and is given by $E \theta(U)$ where $U$ is Uni$(0,1)$. In particular, from \eqref{finiteineq1}, it follows that
\eq\label{limsupbnd}
E \theta(U) \ge \limsup_{T\rightarrow \infty}\frac{1}{T} \sum_{i=1}^T Var(G_i).
\en

Assume, for now, that each $F_i$ has the same zero mean. Since $\theta_T$ converges to $\theta$ pointwise and remain uniformly bounded, it follows from the Dominated Convergence Theorem that
\eq\label{liminfbnd}
E \theta(U) = \lim_{T\rightarrow \infty} E \theta_T(U) = \frac{1}{T} \sum_{i=1}^T Var(F_i),
\en
since $F_i^{-1}(U)$ has the distribution $F_i$ and we have assumed its mean to be zero. Combining \eqref{limsupbnd} and \eqref{liminfbnd} we have proved the stated proposition.

Finally, the mean zero assumption on $F_i$'s can be easily removed by suitable translations as was shown in the proof of the last proposition.
\end{proof}

\section*{Acknowledgment} Many thanks to Tilmann Gneiting for introducing me to the problem and for the subsequent comments, discussion, and encouragement.

\end{document}